\def\R{\mathbb R}  \def\C{\mathbb C} \def\N{\mathbb N}
\newtheorem{thm}{Theorem}[section]
\newtheorem{lem}[thm]{Lemma}
\newtheorem{rem}[thm]{Remark}
\numberwithin{equation}{section}
\title{Local and global well-posedness in $L^{2} (\R^{n} )$
for the inhomogeneous nonlinear Schr\"{o}dinger equation}
\author{{\bf JinMyong An, JinMyong Kim$^*$}\\
\footnotesize{Faculty of Mathematics, {\bf Kim Il Sung} University, Pyongyang, DPR Korea}\\
\footnotesize{$^*$ Corresponding Author: JinMyong Kim, Faculty of Mathematics, {\bf Kim Il Sung} University}\\
\footnotesize{Email: jm.kim0211@ryongnamsan.edu.kp}
}
\date{}
\begin{document}
\maketitle
\begin{abstract}
This paper investigates the local and global well-posedness for the
inhomogeneous nonlinear Schr\"{o}dinger (INLS) equation
\[iu_{t} +\Delta u=\lambda \left|x\right|^{-b} \left|u\right|^{\sigma } u,
u(0)=u_{0} \in L^{2} \left(\R^{n} \right),\]
where $\lambda \in \C$, $0<b<\min \left\{2,{\rm \;
}n\right\}$ and $0<\sigma \le \frac{4-2b}{n} $. We prove the
local well-posedness and small data global well-posedness of the INLS equation
in the mass-critical case $\sigma =\frac{4-2b}{n} $, which have remained open
 until now. We also obtain some local well-posedness results
in the mass-subcritical case $\sigma <\frac{4-2b}{n} $. In order to obtain the
results above, we establish the Strichartz estimates in Lorentz spaces and use
the contraction mapping principle based on Strichartz estimates.
\end{abstract}

\noindent {\bf Keywords}: Inhomogeneous nonlinear Schr\"{o}dinger equation; mass
critical; mass-subcritical; Lorentz space; Strichartz estimates\\

\noindent {\bf 2010 MSC}: 35Q55, 49K40, 46E30
\section{Introduction}

In this paper, we consider the Cauchy problem for the inhomogeneous nonlinear
Schr\"{o}dinger (INLS) equation
\begin{equation} \label{GrindEQ__1_1_}
\left\{\begin{array}{l} {iu_{t} +\Delta u=\lambda \left|x\right|^{-b}
\left|u\right|^{\sigma } u{\rm ,}}
\\ {u\left(0,\; x\right)=u_{0}
\left(x\right),} \end{array}\right.
\end{equation}
where $u:{\rm \; }\R\times \R^{n} \to \C$, $u_{{\rm 0}} :{\rm \; \; }\R^{n} \to \C,{\rm \; }b,{\rm \; }\sigma >0$ and $\lambda \in \C$. $\lambda <0$ is the focusing case
and $\lambda >0$ is the defocusing case.

The case $b=0$ is the well-known classic nonlinear Schr\"{o}dinger (NLS) equation
which has been widely studied during the last three decades. One the other
hand, in the end of the last century, it was suggested that in some situations the
beam propagation can be modeled by the inhomogeneous nonlinear Schr\"{o}dinger
equation in the following form:
\begin{equation} \label{GrindEQ__1_2_}
iu_{t} +\Delta u+V\left(x\right)\left|u\right|^{\sigma } u=0.
\end{equation}

We refer the reader to [3, 4, 16, 19] for the physical background and
applications of \eqref{GrindEQ__1_2_}. E.q. \eqref{GrindEQ__1_2_} has attracted
a lot of interest in the last two decades. We also refer the reader to [2,
5-15, 18, 20-22, 24] for recent work on E.q. \eqref{GrindEQ__1_2_}. The INLS
equation \eqref{GrindEQ__1_1_} is a particular case of \eqref{GrindEQ__1_2_}
and it has also been studied by many authors in recent years. But several
challenging technical difficulties arise in its study and therefore there are
many unsolved problems in its study. For instance, even the local
well-posedness for \eqref{GrindEQ__1_1_} with initial data in $L^{2} $ in the
mass-critical case $\sigma =\frac{4-2b}{n} $ is still an open problem (cf. [18]). We refer
the reader to [5-12, 14, 15, 18] for recent work on \eqref{GrindEQ__1_1_}.

Before recalling the existence results for \eqref{GrindEQ__1_1_}, let us give
some information for this equation. The INLS equation \eqref{GrindEQ__1_1_} has
the following equivalent form:
\begin{equation} \label{GrindEQ__1_3_}
u\left(t\right)=S\left(t\right)u_{0} -i\lambda\int _{0}^{t}S\left(t-\tau
\right)\left|x\right|^{-b} \left|u\left(\tau \right)\right|^{\sigma } u\left(\tau \right)d\tau  ,
\end{equation}
where $S\left(t\right)=e^{it\Delta } $ is Schr\"{o}dinger semi-group. Recall that
the INLS equation \eqref{GrindEQ__1_1_} has the following conservation laws:
\begin{equation} \label{GrindEQ__1_4_}
M\left(u\left(t\right)\right):=\left\| u\left(t\right)\right\| _{L^{2}
\left(\R^{n} \right)}^{2}=M\left(u_{0} \right),
\end{equation}
\begin{equation} \label{GrindEQ__1_5_}
E\left(u\left(t\right)\right):=\frac{1}{2} \left\|
\nabla u\left(t\right)\right\| _{L^{2} \left(\R^{n} \right)}^{2} +\frac{\lambda
}{2+\sigma } \left\| \left|x\right|^{-b} \left|u\right|^{\sigma +2} \right\|
_{L^{1} \left(\R^{n} \right)}=E\left(u_{0} \right).
\end{equation}

It is well-known that under a standard scaling argument, the critical power in
$H^{s} \left(\R^{n} \right)$ with $s<\frac{n}{2} $ for \eqref{GrindEQ__1_1_} is
given by
\begin{equation} \label{GrindEQ__1_6_}
\sigma _{s} =\frac{4-2b}{n-2s} .
\end{equation}

When $s<\frac{n}{2} $, corresponding to the critical case, $\sigma
<\frac{4-2b}{n-2s} $ is said to be a subcritical power in $H^{s} \left(\R^{n}
\right)$. If $s\ge \frac{n}{2} $, $\sigma <\infty $ is said to be a subcritical
power in $H^{s} \left(\R^{n} \right)$. Especially if $\sigma =\frac{4-2b}{n} $,
the problem is known as $L^{2} \left(\R^{n} \right)-$critical or mass-critical;
if $\sigma =\frac{4-2b}{n-2} $, it is called $H^{1} \left(\R^{n}
\right)-$critical or energy-critical.

We briefly recall some local and global well-posedness results for the INLS
equation \eqref{GrindEQ__1_1_}. Cazenave [5] studied the well-posedness in
$H^{1} \left(\R^{n} \right)$. Using an abstract theory, he proved that it is
appropriate to seek solution of \eqref{GrindEQ__1_1_} satisfying
\[u\in C\left(\left[0,{\rm \; }T\right),\;H^{1} \left(\R^{n}
\right)\right)\bigcap C^{1} \left(\left[0,{\rm \; }T\right),\;H^{-1}
\left(\R^{n} \right)\right)\]
for some $T>0$. He also proved that any local solution of the defocusing INLS
equation \eqref{GrindEQ__1_1_} with $u_{0} \in H^{1} \left(\R^{n} \right)$
extends globally in time. Using the abstract theory developed by Cazenave [5],
Genoud--Stuart [14] also studied the local and global well-posedness in $H^{1}
\left(\R^{n} \right)$ for the focusing INLS equation. They proved that the
focusing INLS equation \eqref{GrindEQ__1_1_} with $0<b<\min \{ 2,{\rm \; }n\} $
is locally well-posed in $H^{1} \left(\R^{n} \right)$ in the subcritical case,
i.e. in the case $0<\sigma <\sigma ^{*} $ where
\begin{equation} \label{GrindEQ__1_7_}
\sigma ^{*} =\left\{\begin{array}{l} {\frac{4-2b}{n-2} ,{\rm \; }n\ge 3,} \\
{\infty ,{\rm \; }n=1,{\rm \; }2.} \end{array}\right.
\end{equation}
They also established the large data global well-posedness in the case
$0<\sigma <\sigma _{0} $ and the small data global well-posedness in the case
$\sigma _{0} \le \sigma <\sigma ^{*} $, where $\sigma _{0} $ is $L^{2}
\left(\R^{n} \right)-$critical power.
Later, Genoud [15] and Farah [10] also studied the global well-posedness in $H^{1} \left(\R^{n} \right)$ for the
focusing INLS equation \eqref{GrindEQ__1_1_} with $0<b<\min \left\{2,{\rm \;
}n\right\}$ in the case $\sigma _{0} \le \sigma <\sigma ^{*} $ by using some
sharp Gagliardo-Nirenberg inequalities. Recently, Guzm\'{a}n [18] used contraction
mapping principle based on the Strichartz estimates for the first time to
establish the local and global well-posedness of the INLS equation
\eqref{GrindEQ__1_1_}. He proved that the INLS equation \eqref{GrindEQ__1_1_}
with $0<b<\min \left\{2,{\rm \; }n\right\}$ is locally well-posed in $L^{2}
\left(\R^{n} \right)$ in the mass-subcritical case, i.e. $0<\sigma
<\frac{4-2b}{n} $ and that the local solution above extends globally in time by
using the mass-conservation law. He also studied the local and global
well-posedness in $H^{s} \left(\R^{n} \right)$ with $0<s\le \min
\left\{1,{\rm \; }\frac{n}{2} \right\}$. He proved that the INLS equation is
locally well-posed in $H^{s} \left(\R^{n} \right)$ if $0<b<\tilde{2}$, $0<
s\le \min \left\{1,{\rm \; }\frac{n}{2} \right\}$ and $0<\sigma <\tilde{\sigma
}$ where
\begin{equation} \label{GrindEQ__1_8_}
\tilde{2}=\left\{\begin{array}{l} {\frac{n}{3} ,{\rm \; }n=1,{\rm \;
}2,{\rm \; }3,} \\ {2,{\rm \;  }n\ge 4,}
\end{array}\right.   \tilde{\sigma }=\left\{\begin{array}{l} {\sigma _{s} ,{\rm
\; }s<\frac{n}{2} ,} \\ {\infty ,{\rm \; }s=\frac{n}{2} .}
\end{array}\right.
\end{equation}
Moreover, he established the small data global well-posedness in $H^{s}
\left(\R^{n} \right)$ in the case $0<s\le \min \left\{1,{\rm \; }\frac{n}{2}
\right\}$, $0<b<\tilde{2}$ and $\sigma _{0} <\sigma <\tilde{\sigma }$. Later, Dihn [7] improved the local well-posedness result in $H^{1} \left(\R^{n}
\right)$ of [18] by extending the range of $b$. See also Theorem 1.2 of [8].

But the local and global well-posedness in $H^{s} \left(\R^{n} \right)$  with $0\le s<\frac{n}{2}$ for the
INLS equation \eqref{GrindEQ__1_1_} in the critical case, i.e. $\sigma
=\frac{4-2b}{n-2s} $ have remained open until now. See [9] or Remark 1.7 of [18]
for example.

In this paper, we mainly establish the local well-posedness and small data
global well-posedness in $L^{2} \left(\R^{n} \right)$ for the mass-critical INLS
equation which have remained open until now. To arrive at this goal, we
first establish the Strichartz estimates for the Schr\"{o}dinger semi-group in
Lorentz spaces. We know that the Strichartz estimates for the Schr\"{o}dinger
semi-group in Soblev spaces or Besov spaces play an important role in the study of
classic nonlinear Schr\"{o}dinger equation. But these estimates don't give the
sufficient tools to study the mass-critical INLS equation. For this reason, we
establish the Strichartz estimates for the Schr\"{o}dinger semi-group in Lorentz
spaces and by using them, we prove the local well-posedness and small data
global well-posedness in $L^{2} \left(\R^{n} \right)$ for the mass-critical INLS
equation (see Theorem 1.1). In addition, we give the alternative proof of the
local well-posedness in $L^{2} \left(\R^{n} \right)$ for the mass-subcritical
INLS equation (see Theorem 1.3).

Before stating our main results, we define the important numbers which are used
throughout the paper:
\begin{equation} \label{GrindEQ__1_9_}
2^{*} :=\left\{\begin{array}{l} {\frac{2n}{n-2} ,{\rm \; }n\ge 3,} \\ {\infty
,{\rm \; }n=1,{\rm \; }2,} \end{array}\right.
\end{equation}
\begin{equation} \label{GrindEQ__1_10_}
\frac{1}{\gamma (\cdot )} =\frac{n}{2} \left(\frac{1}{2} -\frac{1}{\cdot }
\right).
\end{equation}

The main results are the following local well-posedness and small data global
well-posedness in $L^{2} $ for the mass-critical INLS equation.

\begin{thm}\label{thm 1.1.}
Let $n\in \N$, $0<b<\min \left\{2,{\rm \;
}n\right\}$ and $\sigma =\frac{4-2b}{n} $. If $u_{0} \in L^{2} \left(\R^{n}
\right)$, then there exists $T^{*} =T^{*} \left(u_{0} \right)>0$ such that
\eqref{GrindEQ__1_1_} has a unique solution
\begin{equation} \label{GrindEQ__1_11_}
u\in L_{loc}^{\gamma \left(r\right)} \left(\left[0,{\rm \; }T^{*} \right),{\rm \;
}L^{r,2} \left(\R^{n} \right)\right),
\end{equation}
where
\begin{equation} \label{GrindEQ__1_12_}
r=\frac{n\left(\sigma +2\right)}{n-b} .
\end{equation}
If $T^{*} <\infty $, then
\begin{equation} \label{GrindEQ__1_13_}
L^{\gamma \left(r\right)} \left(\left[0,{\rm \; }T^{*} \right),{\rm \;
}L^{r,2} \left(\R^{n} \right)\right)=\infty .
\end{equation}
Moreover, for any $2\le p<2^{*} $, we have
\begin{equation} \label{GrindEQ__1_14_}
u\in L_{loc}^{\gamma \left(p\right)} \left(\left[0,{\rm \; }T^{*} \right),{\rm
\; }L^{p,2} \left(\R^{n} \right)\right),
\end{equation}
If $\left\| u_{0} \right\| _{2} $ is sufficiently small, then the above
solution is a global one, i.e. $T^{*} =\infty $ and
\begin{equation} \label{GrindEQ__1_15_}
\left\| u\right\| _{L^{\gamma \left(p\right)} \left(\left[0,{\rm \; }\infty
\right),{\rm \; }L^{p,2} \left(\R^{n} \right)\right)} \lesssim\left\| u_{0}
\right\| _{2} ,
\end{equation}
for any $2\le p<2^{*} $.
\end{thm}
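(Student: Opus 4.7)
The strategy is a Banach fixed-point argument for the Duhamel map $\Phi(u)(t)=S(t)u_{0}-i\lambda\int_{0}^{t}S(t-\tau)|x|^{-b}|u(\tau)|^{\sigma}u(\tau)\,d\tau$, carried out in a Strichartz space built on the Lorentz-valued norm $L^{\gamma(r)}(I,L^{r,2})$. The first step is to verify that $(\gamma(r),r)$ with $r=n(\sigma+2)/(n-b)$ is a Schr\"odinger-admissible pair lying in the range $2\le r<2^{*}$; a direct computation using $\sigma=(4-2b)/n$ and $0<b<\min\{2,n\}$ confirms both the admissibility relation $2/\gamma(r)+n/r=n/2$ and the bound $r<2^{*}$ (the latter reduces to $b<2$).

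The spatial nonlinear estimate is the generalized H\"older inequality in Lorentz spaces. Since $|x|^{-b}\in L^{n/b,\infty}(\R^{n})$ and since the choice of $r$ in \eqref{GrindEQ__1_12_} is tuned precisely to the identity $1/r'=b/n+(\sigma+1)/r$, one obtains
\[
\bigl\||x|^{-b}|u|^{\sigma}u\bigr\|_{L^{r',2}}\lesssim\bigl\||x|^{-b}\bigr\|_{L^{n/b,\infty}}\|u\|_{L^{r,2}}^{\sigma+1}\lesssim\|u\|_{L^{r,2}}^{\sigma+1}.
\]
The temporal closure hinges on the algebraic identity $(\sigma+1)\gamma(r)'=\gamma(r)$, which is equivalent to the mass-critical relation $\sigma=(4-2b)/n$. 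Combining this with the Strichartz estimates in Lorentz spaces proved earlier in the paper yields, for every admissible pair $(\gamma(p),p)$ with $2\le p<2^{*}$,
\[
\|\Phi(u)\|_{L^{\gamma(p)}(I,L^{p,2})}\le C_{p}\|u_{0}\|_{L^{2}}+C_{p}\|u\|_{L^{\gamma(r)}(I,L^{r,2})}^{\sigma+1},
\]
together with the analogous difference estimate obtained from the pointwise bound $\bigl||u|^{\sigma}u-|v|^{\sigma}v\bigr|\lesssim(|u|^{\sigma}+|v|^{\sigma})|u-v|$ and Lorentz--H\"older.

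The main obstacle, and the reason this case has resisted the classical Lebesgue framework, is that in the mass-critical regime no positive power of $|I|$ can be extracted from H\"older in time. I would resolve this by exploiting the global Strichartz bound $S(\cdot)u_{0}\in L^{\gamma(r)}(\R,L^{r,2})$: by dominated convergence, $\|S(\cdot)u_{0}\|_{L^{\gamma(r)}((0,T),L^{r,2})}\to 0$ as $T\to 0$. Choosing $T^{*}=T^{*}(u_{0})$ so that this quantity lies below a threshold $\eta$ depending only on $C_{r}$, the contraction closes on the ball $\{u:\|u\|_{L^{\gamma(r)}((0,T^{*}),L^{r,2})}\le 2\eta\}$, producing the solution in \eqref{GrindEQ__1_11_}. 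Uniqueness and the persistence statement \eqref{GrindEQ__1_14_} follow by reapplying the Strichartz estimate with $(\gamma(p),p)$ on the left and the already-controlled $L^{\gamma(r)}L^{r,2}$ norm on the right; the blow-up alternative \eqref{GrindEQ__1_13_} is obtained by the standard maximality argument, since a finite $L^{\gamma(r)}L^{r,2}$ norm would let one iterate the local theory past $T^{*}$. Finally, when $\|u_{0}\|_{2}$ is sufficiently small the global Strichartz estimate directly gives $\|S(\cdot)u_{0}\|_{L^{\gamma(r)}(\R,L^{r,2})}\lesssim\|u_{0}\|_{2}<\eta$, so one may take $T=\infty$ from the outset and read off \eqref{GrindEQ__1_15_} from the same closed Strichartz inequality for the fixed point.
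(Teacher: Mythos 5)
Your proposal is correct and follows essentially the same route as the paper: the contraction mapping argument in $L^{\gamma(r)}(I,L^{r,2})$ based on the Lorentz-space Strichartz estimates, the spatial Lorentz--H\"older estimate using $|x|^{-b}\in L^{n/b,\infty}$ with $1/r'=b/n+(\sigma+1)/r$, the temporal identity $1/\gamma(r)'=(\sigma+1)/\gamma(r)$, and the smallness of $\|S(\cdot)u_{0}\|_{L^{\gamma(r)}((0,T),L^{r,2})}$ as $T\to 0$ (respectively of $\|u_{0}\|_{2}$ for the global result) to close the fixed point. No substantive differences to report.
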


The similar statements are valid in the negative time direction and
we omit the details.

\begin{rem}\label{rem 1.2.}
\textnormal{In theorem 1.1, $T^{*} $ depends on not only
$\left\| u_{0} \right\| _{2} $ but also the choice of $u_{0} $ in $L^{2} $,
which is easily seen from the scaling $u\left(t,{\rm \; }x\right)\to u_{\lambda
} \left(t,{\rm \; }x\right):=\lambda ^{\frac{n}{2} } u\left(\lambda ^{2} t,{\rm
\; }\lambda x\right)$. Thus we can't use the mass conservation law to extend
the local solution above to global one.}
\end{rem}

We also have the following local well-posedness in $L^{2} $ for the
mass-subcritical INLS equation.

\begin{thm}\label{thm 1.3.}
Let $n\in \N$, $0<b<\min \left\{2,{\rm \;
}n\right\}$ and $0<\sigma <\frac{4-2b}{n} $. If $u_{0} \in L^{2} \left(\R^{n}
\right)$, then there exists $T=T\left(\left\| u_{0} \right\| \right)>0$ such
that \eqref{GrindEQ__1_1_} has a unique solution $u$ satisfying
\begin{equation} \label{GrindEQ__1_16_}
u\in L^{\gamma \left(r\right)} \left(\left[-T,{\rm \; }T\right],{\rm \;
}L^{r,2} \left(\R^{n} \right)\right),
\end{equation}
where $r$ is given in \eqref{GrindEQ__1_12_}.
Moreover, for any $2\le p<2^{*} $, the solution satisfies
\begin{equation} \label{GrindEQ__1_17_}
u\in L^{\gamma \left(p\right)} \left(\left[-T,{\rm \; }T\right],{\rm \;
}L^{p,2} \left(\R^{n} \right)\right).
\end{equation}
\end{thm}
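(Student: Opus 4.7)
The plan is to apply Banach's fixed point theorem to the Duhamel map
\[ \Phi(u)(t) = S(t)u_0 - i\lambda \int_0^t S(t-\tau) |x|^{-b} |u(\tau)|^\sigma u(\tau)\, d\tau \]
on a closed ball of the Banach space $X_T := L^{\gamma(r)}([-T,T], L^{r,2}(\R^n))$, where $r = n(\sigma+2)/(n-b)$ is the exponent in \eqref{GrindEQ__1_12_}. A direct check via \eqref{GrindEQ__1_10_} shows that $(\gamma(r), r)$ is Schr\"odinger-admissible, so the Lorentz-Strichartz estimate established earlier in the paper yields $\|S(\cdot)u_0\|_{X_T} \lesssim \|u_0\|_2$ uniformly in $T$.

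For the nonlinearity I first handle the spatial variable. Since $|x|^{-b}\in L^{n/b,\infty}(\R^n)$ and Lorentz norms satisfy the power rule $\||u|^{\sigma+1}\|_{L^{r/(\sigma+1),2/(\sigma+1)}} = \|u\|_{L^{r,2}}^{\sigma+1}$, H\"older's inequality in Lorentz spaces gives
\[ \bigl\||x|^{-b}|u|^\sigma u\bigr\|_{L^{r',2/(\sigma+1)}_x} \lesssim \|u\|_{L^{r,2}_x}^{\sigma+1}, \]
the exponent identity $1/r' = b/n + (\sigma+1)/r$ being precisely the reason for the choice of $r$. Combining the Lorentz embedding $L^{r',2/(\sigma+1)} \hookrightarrow L^{r',2}$ (valid because $2/(\sigma+1)\le 2$) with the dual Lorentz-Strichartz inequality for the admissible pair $(\gamma(r), r)$, the Duhamel term is controlled by the $L^{\gamma(r)'}_t([-T,T])$ norm of $\|u\|_{L^{r,2}_x}^{\sigma+1}$.

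The last ingredient is H\"older in time. A routine computation with \eqref{GrindEQ__1_10_} shows that $\gamma(r)'(\sigma+1) < \gamma(r)$ if and only if $\sigma < (4-2b)/n$; in the mass-subcritical regime, H\"older on $t \in [-T,T]$ therefore yields
\[ \|\Phi(u) - S(\cdot)u_0\|_{X_T} \lesssim T^\theta \|u\|_{X_T}^{\sigma+1}, \qquad \theta := 1 - (\sigma+2)/\gamma(r) > 0, \]
with an analogous Lipschitz bound for $\Phi(u)-\Phi(v)$. Taking the ball of radius $R=2C\|u_0\|_2$ and picking $T=T(\|u_0\|_2)$ so small that $CT^\theta R^\sigma \le 1/2$ turns $\Phi$ into a contraction, producing the unique fixed point satisfying \eqref{GrindEQ__1_16_}. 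Claim \eqref{GrindEQ__1_17_} then follows as a post-processing step: for each admissible pair $(\gamma(p), p)$ with $2\le p<2^*$, apply the Lorentz-Strichartz estimate with this pair on the left while retaining the dual pair $(\gamma(r), r)$ for the nonlinearity on the right, and insert the bound for $u$ in $X_T$ already obtained.

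The main technical point is the spatial nonlinear estimate: one has to carefully track the secondary Lorentz indices (which degrade from $2$ to $2/(\sigma+1)$ upon raising $u$ to the $(\sigma+1)$-st power) and then exploit the Lorentz embedding that restores the index back to $2$. Working in Lorentz rather than Lebesgue spaces is essential here because the endpoint inclusion $|x|^{-b}\in L^{n/b,\infty}$ lets the H\"older balance close at the exact scaling exponent $r = n(\sigma+2)/(n-b)$ with no $\varepsilon$-loss; this is the very feature that makes the same framework capable of reaching the mass-critical endpoint in Theorem~\ref{thm 1.1.}.
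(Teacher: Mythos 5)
Your proposal is correct and follows essentially the same route as the paper: a contraction in $L^{\gamma(r)}([-T,T],L^{r,2})$ built on the Lorentz-space Strichartz estimates, the spatial H\"older step with $|x|^{-b}\in L^{n/b,\infty}$ at the exponent balance $1/r'=b/n+(\sigma+1)/r$, and H\"older in time producing the factor $T^{1-(n\sigma+2b)/4}$ (your $\theta=1-(\sigma+2)/\gamma(r)$ is exactly this exponent). The only cosmetic difference is that the paper keeps the secondary Lorentz index at $2$ by estimating $\||u|^{\sigma}\|_{L^{r/\sigma,\infty}}\|u\|_{L^{r,2}}$ directly, whereas you pass through $L^{r',2/(\sigma+1)}$ and embed back into $L^{r',2}$ --- both are valid.
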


Using Theorem 1.3 and the mass-conservation law, we
immediately have the large data global well-posedness and we omit the details.

This paper is organized as follows. In Section 2, we introduce some basic notation and function
spaces and recall some useful facts concerned with Lorentz spaces. In Section 3,
we establish the Strichartz estimates in Lorentz spaces. In Section 4, we prove
Theorem 1.1 and Theorem 1.3.

\section{Preliminaries}

Let us introduce the notation used throughout the paper. As usual, we use $\C$, $\R$ and $\N$ to stand for the sets of complex, real and natural numbers, respectively. $C>0$ will denote positive universal constant, which can be different at different places. $a\lesssim b$ means $a\le Cb$ for some constant $C>0$.
We denote by $p'$ the dual number of $p\in \left[1,{\rm \; }\infty \right]$,
i.e. $1/p+1/p'=1$. As in [25], for $s\in \R$ and $1<p<\infty $, we denote by
$H_{p}^{s} \left(\R^{n} \right)$ the inhomogeneous Sobolev spaces. We shall
abbreviate $H_{2}^{s} \left(\R^{n} \right)$ as $H^{s} \left(\R^{n} \right)$. For
$0<p,\; q\le \infty $, we denote by $L^{p,q} \left(\R^{n}
\right)$ the Lorentz space. Note that $L^{p,q} \left(\R^{n}
\right)$ is a quasi-Banach space for $0<p,\; q\le \infty $.  When $p,\; q>1,$  $L^{p,q} \left(\R^{n}
\right)$ can be turned into a Banach space via an equivalent norm.  Note also that $L^{p,p} \left(\R^{n} \right)=L^{p}
\left(\R^{n} \right)$ and that the dual of $L^{p,q}(\R^{n})$ is $L^{p',q'}(\R^{n})$, for $1<p,\; q<\infty$. See [17] for details. For
$I\subset \R$ and $\gamma \in \left[1,{\rm \; }\infty \right]$, we will use the space-time mixed space $L^{\gamma } \left(I,{\rm \; }X\left(\R^{n}
\right)\right)$ whose quasi-norm is defined by
\[{\rm \; }\left\| f\right\|_{L^{\gamma } \left(I,{\rm \; }X\left(\R^{n} \right)\right)}
=\left(\int _{I}\left\| f\right\| _{X\left(\R^{n} \right)}^{\gamma } dt
\right)^{\frac{1}{\gamma } } ,\]
with a usual modification when $\gamma =\infty $, where $X\left(\R^{n} \right)$
is a quasi-normed space on $\R^{n} $ such as Lebesgue space or Lorentz space. If
there is no confusion, $\R^{n} $ will be omitted in various function spaces.

We state some useful properties of Lorentz spaces.

\begin{lem}\label{lem 2.1.} $($[17]$)$
$\left|x\right|^{-\frac{n}{p} } $ is in
$L^{p,\infty } \left(\R^{n} \right)$.
\end{lem}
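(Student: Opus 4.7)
The plan is to verify the claim by directly computing the distribution function of $f(x) = |x|^{-n/p}$ and showing that the weak-$L^p$ quasi-norm is finite. Recall that for a measurable function $f$ on $\mathbb{R}^n$,
\[
\|f\|_{L^{p,\infty}} = \sup_{\lambda > 0} \lambda\, \bigl|\{x \in \mathbb{R}^n : |f(x)| > \lambda\}\bigr|^{1/p}.
\]
So the task reduces to estimating the super-level sets of $|x|^{-n/p}$.

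First I would observe that, since $p > 0$, the function $t \mapsto t^{-n/p}$ is strictly decreasing on $(0,\infty)$, so the inequality $|x|^{-n/p} > \lambda$ is equivalent to $|x| < \lambda^{-p/n}$. Hence the super-level set is exactly the open ball $B(0, \lambda^{-p/n})$. Its Lebesgue measure is $\omega_n \lambda^{-p}$, where $\omega_n$ denotes the volume of the unit ball in $\mathbb{R}^n$.

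Substituting into the definition gives
\[
\lambda\, \bigl|\{x : |x|^{-n/p} > \lambda\}\bigr|^{1/p} = \lambda\, \bigl(\omega_n \lambda^{-p}\bigr)^{1/p} = \omega_n^{1/p},
\]
independent of $\lambda$. Taking the supremum over $\lambda > 0$ yields $\|\,|x|^{-n/p}\,\|_{L^{p,\infty}} = \omega_n^{1/p} < \infty$, which is exactly the claim. There is no real obstacle here: the entire proof is a one-line change of variables in the distribution function, and the main thing to be careful about is simply to cite the correct (equivalent) definition of the $L^{p,\infty}$ quasi-norm, since some references use the nonincreasing rearrangement $f^*(t) = \inf\{\lambda : d_f(\lambda) \le t\}$ with norm $\sup_{t>0} t^{1/p} f^*(t)$; either formulation yields the same constant $\omega_n^{1/p}$.
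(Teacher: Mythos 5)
Your computation is correct and is exactly the standard argument: the super-level set $\{|x|^{-n/p}>\lambda\}$ is the ball of radius $\lambda^{-p/n}$, whose measure $\omega_n\lambda^{-p}$ makes $\lambda\,d_f(\lambda)^{1/p}$ constant, so the weak-$L^p$ quasi-norm equals $\omega_n^{1/p}<\infty$. The paper gives no proof of its own here, simply citing [17] (Grafakos), where this same one-line distribution-function calculation appears; your argument is the one the citation stands in for.
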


\begin{lem}\label{lem 2.2.} $($[17]$)$
For all $0<p,{\rm \; }r<\infty $, $0<q\le
\infty $ we have
\begin{equation} \label{GrindEQ__2_1_}
\left\| \left|f\right|^{r} \right\| _{L^{p,q} } =\left\| f\right\| _{L^{pr,qr}
}^{r} .
\end{equation}
\end{lem}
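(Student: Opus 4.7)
The plan is to reduce the identity to the behavior of the decreasing rearrangement under the pointwise power map, and then simply change variables in the defining integral. First I would recall the standard representation of the Lorentz quasi-norm in terms of the decreasing rearrangement $f^{*}$:
\[
\|f\|_{L^{p,q}} = \left(\int_{0}^{\infty}\bigl(t^{1/p}f^{*}(t)\bigr)^{q}\,\frac{dt}{t}\right)^{1/q}\quad(0<q<\infty),\qquad \|f\|_{L^{p,\infty}}=\sup_{t>0}t^{1/p}f^{*}(t).
\]
This formulation is what makes the claim essentially a one-line computation once the rearrangement is handled, so the real content is the rearrangement identity.

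The first step is to show the pointwise equality $(|f|^{r})^{*}(t)=\bigl(f^{*}(t)\bigr)^{r}$. For this I would compute the distribution function of $|f|^{r}$: since the map $\alpha\mapsto \alpha^{1/r}$ is strictly increasing on $(0,\infty)$, the level set $\{|f|^{r}>\alpha\}$ equals $\{|f|>\alpha^{1/r}\}$, so $d_{|f|^{r}}(\alpha)=d_{f}(\alpha^{1/r})$. Plugging this into the definition $f^{*}(t)=\inf\{\alpha>0:d_{f}(\alpha)\le t\}$ and substituting $\beta=\alpha^{1/r}$ yields $(|f|^{r})^{*}(t)=\bigl(f^{*}(t)\bigr)^{r}$. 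This is the key step, but it is entirely routine measure theory.

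The second step, for $0<q<\infty$, is to expand
\[
\bigl\||f|^{r}\bigr\|_{L^{p,q}}^{q}=\int_{0}^{\infty}\bigl(t^{1/p}(f^{*}(t))^{r}\bigr)^{q}\frac{dt}{t}=\int_{0}^{\infty}\bigl(t^{1/(pr)}f^{*}(t)\bigr)^{qr}\frac{dt}{t}=\|f\|_{L^{pr,qr}}^{qr},
\]
which is exactly $\bigl(\|f\|_{L^{pr,qr}}^{r}\bigr)^{q}$, giving \eqref{GrindEQ__2_1_} after taking $q$-th roots. For $q=\infty$, I would just note that $\sup_{t}t^{1/p}(f^{*}(t))^{r}=\bigl(\sup_{t}t^{1/(pr)}f^{*}(t)\bigr)^{r}$ since $x\mapsto x^{r}$ is monotone on $(0,\infty)$, again giving the identity. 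I do not anticipate any real obstacle: the only subtlety is ensuring the distribution-function computation goes through without assumptions of finiteness, which is handled by allowing $f^{*}$ to take the value $+\infty$ and interpreting the integrals accordingly. Finally, the identity is usually quoted as a standard property, and an alternative (and equally short) proof via the layer-cake representation of $\int|f|^{rq}\,d\mu$ could be given, but the rearrangement approach above is the cleanest.
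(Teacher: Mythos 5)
Your proof is correct. Note that the paper itself gives no argument for this lemma at all --- it is stated with a bare citation to Grafakos [17] --- so there is no ``paper proof'' to compare against; what you have written is the standard self-contained justification one would find in that reference. The two steps are both sound: the rearrangement identity $(|f|^{r})^{*}(t)=(f^{*}(t))^{r}$ follows exactly as you say from $d_{|f|^{r}}(\alpha)=d_{f}(\alpha^{1/r})$ together with the monotonicity of $\beta\mapsto\beta^{r}$ when passing the power through the infimum, and the rest is the pointwise identity of integrands $\bigl(t^{1/p}(f^{*}(t))^{r}\bigr)^{q}=\bigl(t^{1/(pr)}f^{*}(t)\bigr)^{qr}$ (no actual change of variables is even needed), plus the corresponding supremum identity when $q=\infty$, where $qr=\infty$ is the right reading of the exponent. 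Your remark about allowing $f^{*}$ to take the value $+\infty$ (infimum over the empty set) covers the only degenerate case. The alternative layer-cake route you mention would work equally well; there is nothing to add or repair here.
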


\begin{lem}\label{lem 2.3.} $($[17]$)$
Suppose $0<p\le \infty $ and $0<q<r\le \infty $. Then we have
\begin{equation} \label{GrindEQ__2_2_}
\left\| f\right\| _{L^{p,r} } \le C_{p,q,r} \left\| f\right\| _{L^{p,q} } .
\end{equation}
In other words, $L^{p,q} $ is a subspace of $L^{p,r} $.
\end{lem}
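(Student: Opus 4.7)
The plan is to reduce the embedding to the chain $L^{p,q} \hookrightarrow L^{p,\infty} \hookrightarrow L^{p,r}$ via an elementary interpolation between the $L^{p,q}$ norm and the $L^{p,\infty}$ norm, working directly with the decreasing rearrangement $f^{*}$ of $f$. Recall that for $0<p<\infty$ and $0<q<\infty$,
\[
\|f\|_{L^{p,q}} = \left(\int_{0}^{\infty} (t^{1/p} f^{*}(t))^{q}\,\frac{dt}{t}\right)^{1/q},
\qquad
\|f\|_{L^{p,\infty}} = \sup_{t>0}\; t^{1/p} f^{*}(t),
\]
while the case $p=\infty$ is trivial since $L^{\infty,q}=\{0\}$ for $q<\infty$ and $L^{\infty,\infty}=L^{\infty}$.

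First I would establish the easy half of the chain, $L^{p,q} \hookrightarrow L^{p,\infty}$. Fix $t>0$ and use the monotonicity $f^{*}(s)\ge f^{*}(t)$ for $s\le t$:
\[
\|f\|_{L^{p,q}}^{q} \;\ge\; \int_{0}^{t} (s^{1/p} f^{*}(s))^{q}\,\frac{ds}{s}
\;\ge\; (f^{*}(t))^{q}\int_{0}^{t} s^{q/p}\,\frac{ds}{s}
\;=\; \frac{p}{q}\,(t^{1/p} f^{*}(t))^{q}.
\]
Taking the supremum over $t$ gives $\|f\|_{L^{p,\infty}} \le (q/p)^{1/q}\|f\|_{L^{p,q}}$.

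Next, for the target index $r$ with $q<r<\infty$, I would split off a factor of $(t^{1/p} f^{*}(t))^{r-q}$ and bound it uniformly:
\[
\|f\|_{L^{p,r}}^{r}
= \int_{0}^{\infty}(t^{1/p} f^{*}(t))^{r-q}(t^{1/p} f^{*}(t))^{q}\,\frac{dt}{t}
\le \|f\|_{L^{p,\infty}}^{r-q}\,\|f\|_{L^{p,q}}^{q}
\le C_{p,q}^{\,r-q}\,\|f\|_{L^{p,q}}^{r},
\]
which, upon taking $r$-th roots, is the desired inequality with $C_{p,q,r}=(q/p)^{(r-q)/(rq)}$. The case $r=\infty$ is already handled above.

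There is no real obstacle; the only thing to watch is the degenerate endpoint $p=\infty$ (handled separately as noted) and, if one wants clean constants, keeping the exponents $q$, $r-q$, $r$ consistent in the interpolation step. The argument is otherwise purely a one-dimensional manipulation of the nonincreasing rearrangement, and needs no appeal to the structure of $\R^{n}$ beyond what is built into the definition of $f^{*}$.
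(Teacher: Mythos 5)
Your proof is correct and is exactly the standard argument: the paper itself offers no proof, simply citing [17] (Grafakos), where this embedding is established by precisely your two steps — the monotonicity bound giving $L^{p,q}\hookrightarrow L^{p,\infty}$ followed by the split $(t^{1/p}f^{*}(t))^{r}=(t^{1/p}f^{*}(t))^{r-q}(t^{1/p}f^{*}(t))^{q}$. The constants and the treatment of the degenerate endpoints $p=\infty$ and $r=\infty$ all check out.
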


We end this section with recalling H\"{o}lder inequality for Lorentz spaces. See [23, 17] for example.

\begin{lem}\label{lem 2.4.} $($H\"{o}lder inequality for Lorentz spaces$)$
Let $0<p,{\rm \; }q,{\rm \; }r\le\infty $, $0<s_{1} ,{\rm \; }s_{2},{\rm \; }s\le \infty $. Assume that
\begin{equation} \label{GrindEQ__2_3_}
\frac{1}{p} +\frac{1}{q} =\frac{1}{r} , \frac{1}{s_{1} } +\frac{1}{s_{2} }
=\frac{1}{s} .
\end{equation}
Then we have
\begin{equation} \label{GrindEQ__2_4_}
\left\| fg\right\| _{L^{r,s} } \le C_{p,q,s_{1} ,s_{2} } \left\| f\right\|
_{L^{p,s_{1} } } \left\| g\right\| _{L^{q,s_{2} } } .
\end{equation}
\end{lem}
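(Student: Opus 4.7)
The plan is to reduce the Hölder inequality for Lorentz spaces to the classical Hölder inequality on $(0,\infty)$ with measure $dt/t$, using the decreasing rearrangement. Recall that for a measurable function $f$ on $\R^n$, its decreasing rearrangement $f^*$ on $(0,\infty)$ satisfies the standard quasi-norm characterization
\begin{equation*}
\|f\|_{L^{p,q}} \;\sim\; \Bigl(\int_0^\infty \bigl[t^{1/p} f^*(t)\bigr]^q \,\tfrac{dt}{t}\Bigr)^{1/q}
\end{equation*}
for $0<p<\infty$, $0<q\le\infty$ (with the obvious sup modification when $q=\infty$), and that the two functions $f,g$ satisfy the submultiplicative rearrangement inequality
\begin{equation*}
(fg)^*(t_1+t_2) \;\le\; f^*(t_1)\,g^*(t_2),
\end{equation*}
which in particular yields $(fg)^*(t)\le f^*(t/2)\,g^*(t/2)$.

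The main step is then a direct computation. Assuming first $p,q,r,s_1,s_2,s$ are all finite, write the quasi-norm on the left-hand side, apply the pointwise bound above, and split the weight using $1/r=1/p+1/q$:
\begin{equation*}
\|fg\|_{L^{r,s}}^s \;\lesssim\; \int_0^\infty \bigl[t^{1/p} f^*(t/2)\bigr]^s \bigl[t^{1/q} g^*(t/2)\bigr]^s \,\tfrac{dt}{t}.
\end{equation*}
Since $1/s_1+1/s_2=1/s$ is equivalent to $s/s_1+s/s_2=1$, the ordinary Hölder inequality on $(0,\infty)$ with measure $dt/t$ and conjugate exponents $s_1/s,\,s_2/s$ separates the integral into a product of two factors. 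After the change of variable $t/2\mapsto t$ in each factor, the two resulting integrals are exactly (powers of) $\|f\|_{L^{p,s_1}}$ and $\|g\|_{L^{q,s_2}}$, with a harmless constant $2^{1/r}$ in front, which proves the claim.

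The remaining work is to treat the boundary exponents. When $s_1=\infty$, $s_2=\infty$, or $s=\infty$, the $L^s$-Hölder step is replaced by the trivial $L^\infty$-Hölder variant, and the same structure survives; similarly one handles $p=\infty$ or $q=\infty$ by absorbing the corresponding factor as a pointwise $L^\infty$-bound via $f^*$. The case $r=\infty$ forces $p=q=\infty$ and is also immediate. The only delicate point worth highlighting is that for $s<1$ one is using Hölder on $dt/t$ with a conjugacy relation that still formally holds, and that the $L^{p,q}$ quantities for $q<1$ are quasi-norms rather than norms; since we only require the quasi-norm inequality \eqref{GrindEQ__2_4_}, this causes no trouble beyond adjusting the universal constant $C_{p,q,s_1,s_2}$.

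The main obstacle is not the algebra but bookkeeping across all of these endpoint configurations in a uniform way; once the basic finite-exponent argument is written out, each endpoint case is a one-line degeneration, and the rearrangement inequality $(fg)^*(2t)\le f^*(t)g^*(t)$ does all the real work. For this reason, and since the statement is standard and proved in [17, 23], it is reasonable to present only the finite-exponent case in detail and quote the endpoint cases by reference.
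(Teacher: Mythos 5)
The paper does not prove Lemma 2.4 at all --- it simply quotes it from [17, 23] --- so there is no in-paper argument to compare against; what matters is whether your proof is sound, and it is. Your route is the standard O'Neil-type argument: the submultiplicativity $(fg)^*(t_1+t_2)\le f^*(t_1)g^*(t_2)$ gives $(fg)^*(t)\le f^*(t/2)g^*(t/2)$, the weight splits as $t^{1/r}=t^{1/p}\,t^{1/q}$, and the relation $\tfrac{s}{s_1}+\tfrac{s}{s_2}=1$ together with $s\le s_1$, $s\le s_2$ (which follows automatically from $\tfrac{1}{s}=\tfrac{1}{s_1}+\tfrac{1}{s_2}$) makes the classical H\"older inequality on $\bigl((0,\infty),\tfrac{dt}{t}\bigr)$ legitimately applicable with exponents $s_1/s,\,s_2/s\ge 1$; the substitution $t\mapsto 2t$ then produces the constant $2^{1/r}$. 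Your treatment of the endpoints is also right: $s=\infty$ forces $s_1=s_2=\infty$ and $r=\infty$ forces $p=q=\infty$, while a single infinite $s_i$ is absorbed as a pointwise bound by $\|f\|_{L^{p,\infty}}$. This is exactly the proof found in the cited references, so presenting the finite-exponent computation and quoting the degenerate cases is a reasonable level of detail.
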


\section{Strichartz estimates in Lorentz spaces }

In this section, we establish the Strichartz estimates in Lorentz spaces
$L^{p,2} $. These estimates play an important role in the study of the
mass-critical INLS equation.

First we obtain the time decay estimates in Lorentz spaces.

\begin{lem}\label{lem 3.1.}
Let $n\in \N, S\left(t\right)=e^{it\Delta }
$ and $2\le p<\infty $. Then
\begin{equation} \label{GrindEQ__3_1_}
\left\| S\left(t\right)f\right\| _{L^{p,2} } \lesssim\left|t\right|^{-n\left({1
\mathord{\left/{\vphantom{1 2}}\right.\kern-\nulldelimiterspace} 2} -{1
\mathord{\left/{\vphantom{1 p}}\right.\kern-\nulldelimiterspace} p} \right)}
\left\| f\right\| _{L^{p',2} } .
\end{equation}
\end{lem}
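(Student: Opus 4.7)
My plan is to obtain \eqref{GrindEQ__3_1_} by real interpolation from the classical endpoint dispersive estimates for the free Schrödinger evolution. Specifically, I would start by recalling the two well-known bounds
\[
\|S(t)f\|_{L^{\infty}} \le (4\pi |t|)^{-n/2}\,\|f\|_{L^{1}}, \qquad \|S(t)f\|_{L^{2}} = \|f\|_{L^{2}},
\]
the first coming from the explicit convolution kernel of $e^{it\Delta}$ and the second from Plancherel's theorem. These give $S(t):L^1\to L^{\infty}$ with operator norm $\lesssim |t|^{-n/2}$ and $S(t):L^2\to L^2$ with operator norm $\lesssim 1$, for each fixed $t\neq 0$.

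Next I would apply the real interpolation method (the Lorentz-space version of the Marcinkiewicz theorem, see e.g.\ Bergh--Löfström or Grafakos) simultaneously on the domain and target. For $\theta\in(0,1)$ the standard identifications
\[
(L^{1},L^{2})_{\theta,q} = L^{p',q}, \qquad (L^{\infty},L^{2})_{\theta,q} = L^{p,q},
\]
hold provided $\tfrac{1}{p'}=1-\tfrac{\theta}{2}$ and $\tfrac{1}{p}=\tfrac{\theta}{2}$, which forces $\theta=2/p$ in both cases consistently. Interpolating $S(t)$ between the two endpoints then yields
\[
\|S(t)f\|_{L^{p,q}} \lesssim \bigl(|t|^{-n/2}\bigr)^{1-\theta}\cdot 1^{\theta}\,\|f\|_{L^{p',q}} = |t|^{-n(1/2-1/p)}\,\|f\|_{L^{p',q}}
\]
for every $2\le p<\infty$ and every $1\le q\le \infty$. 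Specializing $q=2$ gives exactly \eqref{GrindEQ__3_1_}. The edge case $p=2$ is already the Plancherel identity and does not need interpolation.

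The main technical point, and the only real obstacle, is making sure that the real interpolation is applied with matched secondary index $q$ on both the source and target side; once this is settled, the exponent $-n(1/2-1/p)$ falls out automatically from the geometric-mean bound $M_0^{1-\theta}M_1^{\theta}$ on the interpolated operator norm. Everything else is bookkeeping of interpolation parameters. I expect no difficulty in the dependence on $t$, since the two endpoint norms are both independent of $f$ and the interpolation constant depends only on $\theta,q$ and not on $t$.
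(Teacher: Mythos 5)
Your proof is correct and follows essentially the same route as the paper: both deduce \eqref{GrindEQ__3_1_} by real interpolation of the classical dispersive estimate \eqref{GrindEQ__3_2_}, using that the real method with secondary index $2$ lands in the Lorentz space $L^{p,2}$, and both treat $p=2$ separately via Plancherel. The only difference is the choice of endpoints: you interpolate between $L^{1}\to L^{\infty}$ and $L^{2}\to L^{2}$, whereas the paper interpolates between $L^{2}\to L^{2}$ and $L^{r'}\to L^{r}$ for an auxiliary finite $r>p$, using $\left(L^{2,2},\,L^{r,r}\right)_{\theta,2}=L^{p,2}$ and thereby avoiding the exponents $1$ and $\infty$ in the Lorentz identification --- both choices are legitimate and produce the same decay exponent $-n\left(1/2-1/p\right)$.
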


\begin {proof}
For any $2<p<r<\infty $, we can take $\theta \in\left(0,{\rm \;
}1\right)$ such that $\frac{1-\theta }{{\rm 2}} +\frac{\theta }{r} =\frac{1}{p}$.
It is well-known that
\begin{equation} \label{GrindEQ__3_2_}
\left\| S\left(t\right)f\right\| _{L^{p} } \lesssim\left|t\right|^{-n\left({1
\mathord{\left/{\vphantom{1 2}}\right.\kern-\nulldelimiterspace} 2} -{1
\mathord{\left/{\vphantom{1 p}}\right.\kern-\nulldelimiterspace} p} \right)}
\left\| f\right\| _{L^{p'} } ,
\end{equation}
for every $2\le p\le \infty $. Noticing $L^{p,p} =L^{p} $, we have
\begin{equation} \label{GrindEQ__3_3_}
\left\| S\left(t\right)f\right\| _{L^{2,2} } \lesssim\left\| f\right\| _{L^{2,2} }
,
\end{equation}
\begin{equation} \label{GrindEQ__3_4_}
\left\| S\left(t\right)f\right\| _{L^{r,r} } \lesssim\left|t\right|^{-n\left({1
\mathord{\left/{\vphantom{1 2}}\right.\kern-\nulldelimiterspace} 2} -{1
\mathord{\left/{\vphantom{1 r}}\right.\kern-\nulldelimiterspace} r} \right)}
\left\| f\right\| _{L^{r',r'} } .
\end{equation}
Using \eqref{GrindEQ__3_3_}, \eqref{GrindEQ__3_4_} and the real interpolation
between Lorentz spaces (see Theorem 5.3.1 of [1]): $\left(L^{2,2} ,{\rm \;
}L^{r,r} \right)_{\theta ,{\rm \; }2} =L^{p{\rm ,2}} $, we have
\eqref{GrindEQ__3_1_} for $2<p<\infty $. \eqref{GrindEQ__3_1_} is the same as
\eqref{GrindEQ__3_3_} if $p=2$.
\end{proof}

Using the time decay estimates \eqref{GrindEQ__3_1_} and the well-known dual
estimate techniques (see section 3.2 of [25]), we have the following Strichartz
estimates in Lorentz spaces.

\begin{lem}\label{lem 3.2.}
Let $S\left(t\right)=e^{it\Delta } $, $A_{S}
:=\int _{t_{0} }^{t}S\left(t-\tau \right)\cdot d\tau  $ and $2\le p,{\rm \;
}r<2^{*} $. Then we have
\begin{equation} \label{GrindEQ__3_5_}
\left\| S\left(t\right)\phi \right\| _{L^{\gamma \left(p\right)}
\left(\R,L^{p,2} \right)} \lesssim\left\| \phi \right\| _{2},
\end{equation}
\begin{equation} \label{GrindEQ__3_6_}
\left\| A_{S} f\right\| _{L^{\gamma \left(p\right)} \left(I,L^{p,2} \right)}
\lesssim\left\| f\right\| _{L^{\gamma \left(r\right)^{{'} } } \left(I,L^{r',2}
\right)},
\end{equation}
where $I$ is an interval and $t_{0} \in \bar{I}$.
\end{lem}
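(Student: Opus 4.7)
The plan is to follow the classical $TT^{*}$ approach of Strichartz/Keel--Tao, adapted to the Lorentz setting through the decay estimate of Lemma 3.1 and the 1D Hardy--Littlewood--Sobolev inequality in the time variable.

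First I would prove the homogeneous estimate \eqref{GrindEQ__3_5_}. Define $T\phi:=S(t)\phi$, whose formal adjoint is $T^{*}g=\int_{\R}S(-\tau)g(\tau)\,d\tau$, so that $TT^{*}g(t)=\int_{\R}S(t-\tau)g(\tau)\,d\tau$. Using Minkowski in time and Lemma~3.1 gives
\begin{equation*}
\|TT^{*}g(t)\|_{L^{p,2}}\lesssim\int_{\R}|t-\tau|^{-n(1/2-1/p)}\|g(\tau)\|_{L^{p',2}}\,d\tau.
\end{equation*}
Since $n(1/2-1/p)=2/\gamma(p)$ and $2\le p<2^{*}$ forces $2/\gamma(p)\in[0,1)$, the one-dimensional Hardy--Littlewood--Sobolev inequality with the relation $1/\gamma(p)'-1/\gamma(p)=1-2/\gamma(p)$ yields
\begin{equation*}
\|TT^{*}g\|_{L^{\gamma(p)}(\R,L^{p,2})}\lesssim\|g\|_{L^{\gamma(p)'}(\R,L^{p',2})}.
\end{equation*}
The standard $TT^{*}$ equivalence then gives \eqref{GrindEQ__3_5_}. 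The case $p=2$ is trivial from the $L^{2}$-unitarity of $S(t)$.

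Next I would establish the inhomogeneous estimate \eqref{GrindEQ__3_6_}. Applying \eqref{GrindEQ__3_5_} with $r$ in place of $p$ and dualizing gives $\|T^{*}g\|_{L^{2}}\lesssim\|g\|_{L^{\gamma(r)'}(\R,L^{r',2})}$; here the duality pairing is the natural one since $(L^{r,2})^{*}=L^{r',2}$ (Lemma 2.4 and the preliminary remarks). Composing with the homogeneous estimate at exponent $p$ produces the untruncated bilinear bound
\begin{equation*}
\Bigl\|\int_{\R}S(t-\tau)f(\tau)\,d\tau\Bigr\|_{L^{\gamma(p)}(I,L^{p,2})}\lesssim\|f\|_{L^{\gamma(r)'}(I,L^{r',2})}.
\end{equation*}
To convert this into the retarded estimate with integration restricted to $[t_{0},t]$, I would apply the Christ--Kiselev lemma. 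This requires the strict inequality $\gamma(p)>\gamma(r)'$, which holds throughout our admissible range: since $2\le p<2^{*}$ and $2\le r<2^{*}$ we have $\gamma(p)\in(2,\infty]$ while $\gamma(r)'\in[1,2)$.

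The main obstacle I anticipate is the verification that Christ--Kiselev applies in the Lorentz-valued setting, but this is routine because Christ--Kiselev needs only a Banach-space structure on the target and the scalar inequality $\gamma(p)>\gamma(r)'$ in the time exponents; both conditions are available since $L^{p,2}$ is a Banach space for $p>1$ and we have excluded the double endpoint $p=r=2^{*}$ by the strict inequalities $p,r<2^{*}$. All remaining manipulations reduce to the Hölder inequality for Lorentz spaces (Lemma 2.4) and the decay estimate (Lemma 3.1), both already in hand.
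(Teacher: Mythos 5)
Your proof is correct, but it reaches the retarded off-diagonal estimate \eqref{GrindEQ__3_6_} by a genuinely different route than the paper. For the homogeneous bound \eqref{GrindEQ__3_5_} the two arguments coincide: Lemma 3.1 plus one-dimensional Hardy--Littlewood--Sobolev and the $TT^{*}$ equivalence (the paper only sketches this part, citing [25]). The divergence is in the inhomogeneous part. The paper never leaves the truncated operator $A_{S}$: it first proves the diagonal retarded estimate $L^{\gamma(p)'}L^{p',2}\to L^{\gamma(p)}L^{p,2}$ directly (the truncated kernel is dominated pointwise by the untruncated one, so HLS still applies), then derives $L^{\gamma(p)'}L^{p',2}\to L^{\infty}L^{2}$ and, by duality, $L^{1}L^{2}\to L^{\gamma(p)}L^{p,2}$, and finally splits into the cases $p\le r$ and $p>r$, using H\"older/real interpolation in the first and the vector-valued complex interpolation \eqref{GrindEQ__3_19_} in the second. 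You instead compose the dual homogeneous estimate at exponent $r$ with the homogeneous estimate at exponent $p$ to bound the untruncated operator $L^{\gamma(r)'}L^{r',2}\to L^{\gamma(p)}L^{p,2}$ for every admissible pair at once, and then invoke Christ--Kiselev, whose hypothesis $\gamma(r)'<\gamma(p)$ you verify correctly ($\gamma(r)'<2<\gamma(p)$ precisely because both endpoints $p,r=2^{*}$ are excluded). Your route buys uniformity --- no case distinction in $(p,r)$ and no vector-valued complex interpolation --- at the price of importing the Christ--Kiselev lemma in its Banach-valued form, which is legitimate here since $L^{p,2}$ is normable for $p>1$; the only point to watch is that the version you invoke must tolerate the target exponent $\gamma(p)=\infty$ when $p=2$ (otherwise handle $p=2$ separately by the paper's direct duality argument in Step 2). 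Both proofs are complete and non-circular; the paper's stays entirely within duality and interpolation, which is exactly why it needs the two-case analysis of its Step 4.
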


\begin{proof}
 We use the well-known dual estimate techniques (cf. [25]) and
the proof of this lemma is very similar to that of Strichartz estimates in
Sobolev or Besov spaces. So we only give the sketch of proof of
\eqref{GrindEQ__3_6_}. We divide the proof into four steps. For convenience, we
assume that $I=\left[0,{\rm \; }T\right)$, for some $T\in \left(0,{\rm \;
}\infty \right)$ and that $t_{0}=0$, the proof being the same in the general
case.\\
\textbf{Step 1.} Using time-decay estimate \eqref{GrindEQ__3_1_} and
Hardy-Littlewood-Sobolev inequality, we immediately have
\begin{equation} \label{GrindEQ__3_7_}
\left\| A_{S} f\right\| _{L^{\gamma \left(p\right)} \left(I,L^{p,2} \right)}
\lesssim\left\| f\right\| _{L^{\gamma \left(p\right)^{{'} } } \left(I,L^{p',2}
\right)} ,
\end{equation}
for $2\le p<2^{*} $. By the same argument we also have
\begin{equation} \label{GrindEQ__3_8_}
\left\| \int _{0}^{t}S\left(s-\tau \right)f(\tau )d\tau  \right\| _{L^{\gamma
\left(p\right)} \left(I,L^{p,2} \right)} \lesssim\left\| f\right\| _{L^{\gamma
\left(p\right)^{{'} } } \left(I,L^{p',2} \right)} ,
\end{equation}
for any $2\le p<2^{*} $ and $s\in \left[0,{\rm \; }T\right)$.\\
\textbf{Step 2.}
\begin{eqnarray}\begin{split}\label{GrindEQ__3_9_}
\left\| A_{S} f\right\| _{2} &=\left(\int
_{0}^{t}S\left(t-\tau \right)f\left(\tau \right)d\tau  ,{\rm \; }\int
_{0}^{t}S\left(t-\sigma \right)f\left(\sigma \right)d\sigma  \right)
\\
& =\int _{0}^{t}\int
_{0}^{t}\left(S\left(t-\tau \right)f\left(\tau \right),{\rm \; }S\left(t-\sigma
\right)f\left(\sigma \right)\right)d\sigma d\tau
\\
&=\int _{0}^{t}\int _{0}^{t}\left(f\left(\tau \right),{\rm \;
}S\left(\tau -\sigma \right)f\left(\sigma \right)\right)d\sigma d\tau
\\
&=\int _{0}^{t}\left(f\left(\tau \right),{\rm \;
}\int _{0}^{t}S\left(\tau-\sigma \right)f(\sigma )d\sigma\right)d\tau,
\end{split}\end{eqnarray}
where we used the property $S\left(t\right)^{*} =S\left(-t\right)$. Applying
Lemma 2.4 in space and H\"{o}lder inequality in time, and using \eqref{GrindEQ__3_8_}, we have
\begin{equation} \label{GrindEQ__3_10_}
\left\| A_{S} f\right\| _{2} \lesssim \left\| f\right\| _{L^{\gamma
\left(p\right)^{{'} } } \left(I,L^{p',2} \right)} \left\| \int
_{0}^{t}S\left(\tau -\sigma \right)f\left(\sigma \right)d\sigma  \right\|
_{L^{\gamma \left(p\right)} \left(I,L^{p,2} \right)} \lesssim\left\| f\right\|
_{L^{\gamma \left(p\right)^{{'} } } \left(I,L^{p',2} \right)}^{2} .
\end{equation}
Using the fact $L^{2,2} =L^{2} $ and \eqref{GrindEQ__3_10_}, we have
\begin{equation} \label{GrindEQ__3_11_}
\left\| A_{S} f\right\| _{L^{\infty } \left(I,{\rm \; }L^{2,2} \right)} \lesssim \left\| f\right\| _{L^{\gamma \left(p\right)^{{'} } } \left(I,L^{p',2}
\right)} .
\end{equation}
By the same argument, we also have
\begin{equation} \label{GrindEQ__3_12_}
\left\| \int _{s}^{T}S\left(s-t \right)f(t)dt  \right\| _{L^{\infty }
\left(I,{\rm \; }L^{2,2} \right)} \lesssim\left\| f\right\| _{L^{\gamma
\left(p\right)^{{'} } } \left(I,L^{p',2} \right)} ,
\end{equation}
for any $2\le p<2^{*} $ and $s\in \left[0,{\rm \; }T\right)$.\\
\textbf{Step 3.} We have
\begin{eqnarray}\begin{split} \label{GrindEQ__3_13_}
\int _{0}^{T}\left(A_{S} f,{\rm \; }\varphi
\left(t\right)\right) dt&=\int _{0}^{T}\int _{0}^{t}\left(S\left(t-\tau
\right)f\left(\tau \right),{\rm \; }\varphi \left(t\right)\right)d\tau dt  \\
&
=\int _{0}^{T}\int _{\tau }^{T}\left(f\left(\tau \right),{\rm
\; }S\left(\tau -t\right)\varphi \left(t\right)\right)dt d\tau
\\
&=\int _{0}^{T}\left(f\left(\tau \right),{\rm \; }\int _{\tau
}^{T}S\left(\tau -t\right)\varphi \left(t\right) dt\right) d\tau .
\end{split}
\end{eqnarray}
Thus using H\"{o}lder inequality, \eqref{GrindEQ__3_12_} and
\eqref{GrindEQ__3_13_}, we have

\begin{eqnarray}\begin{split} \label{GrindEQ__3_14_}
\left|\int _{0}^{T}\left(A_{S} f,{\rm \; }\varphi
\left(t\right)\right) dt\right|&\le \left\| f\right\| _{L^{1} \left(I,{\rm \;
}L^{2} \right)} \left\| \int _{\tau }^{T}S\left(\tau -t\right)\varphi
\left(t\right) dt\right\| _{L^{\infty } \left(I,{\rm \; }L^{2} \right)} \\
&\lesssim\left\| f\right\| _{L^{1} \left(I,{\rm \; }L^{2} \right)}
\left\|\varphi\right\| _{L^{\gamma \left(p\right)^{{'} } } \left(I,L^{p',2} \right)}.
\end{split}
\end{eqnarray}
Using \eqref{GrindEQ__3_14_} and the duality (see Section 1.4.3 of [17]), we can obtain
\begin{equation} \label{GrindEQ__3_15_}
\left\| A_{S} f\right\| _{L^{\gamma \left(p\right)} \left(I,L^{p,2} \right)}
\lesssim\left\| f\right\| _{L^{1} \left(I,{\rm \; }L^{2} \right)} .
\end{equation}
\textbf{Step 4.} We prove \eqref{GrindEQ__3_6_} by using \eqref{GrindEQ__3_7_},
\eqref{GrindEQ__3_11_} and \eqref{GrindEQ__3_15_}. We divide study in two cases.

\textbf{Case 1.} $p\in \left[2,{\rm \; }r\right]$. We can take $\theta \in
\left[0,{\rm \; }1\right]$ such that
\begin{equation} \label{GrindEQ__3_16_}
\frac{1}{p} =\frac{1-\theta }{2} +\frac{\theta }{r} ,{\rm \; } \frac{1}{\gamma
\left(p\right)} =\frac{1-\theta }{\infty } +\frac{\theta }{\gamma
\left(r\right)} .
\end{equation}
It follows from \eqref{GrindEQ__3_7_}, \eqref{GrindEQ__3_11_}, Lemma 2.4 and
H\"{o}lder inequality that
\begin{equation} \label{GrindEQ__3_17_}
\left\| A_{S} f\right\| _{L^{\gamma \left(p\right)} \left(I,{\rm \; }L^{p,2}
\right)} \lesssim \left\| A_{S} f\right\| _{L^{\infty } \left(I,{\rm \; }L^{2,2}
\right)}^{1-\theta } \left\| A_{S} f\right\| _{L^{\gamma \left(r\right)}
\left(I,{\rm \; }L^{r,2} \right)}^{\theta } \lesssim\left\| f\right\| _{L^{\gamma
\left(r\right)^{{'} } } \left(I,L^{r',2} \right)} .
\end{equation}

\textbf{Case 2.} $p>r\ge 2$. We can take $\theta \in \left(0,{\rm \; }1\right)$ such
that
\begin{equation} \label{GrindEQ__3_18_}
\frac{1}{r} =\frac{1-\theta }{2} +\frac{\theta }{p},{\rm \; } \frac{1}{\gamma
\left(r\right)^{{'} } } =\frac{1-\theta }{1} +\frac{\theta }{\gamma
\left(p\right)^{{'} } } .
\end{equation}
Using Theorem 5.1.2 of [1] and the complex interpolation (cf. [23]): $\left[L^{2,2},{\rm \; }L^{p',2}\right]_{\theta } =L^{r',2}$, we have the following complex interpolation:
\begin{equation} \label{GrindEQ__3_19_}
\left[L^{1} \left(I,{\rm \; }L^{2} \right),{\rm \; }L^{\gamma
\left(p\right)^{{'} } } \left(I,L^{p',2} \right)\right]_{\theta } =L^{\gamma
\left(r\right)^{{'} } } \left(I,L^{r',2} \right).
\end{equation}
Hence, \eqref{GrindEQ__3_7_} and \eqref{GrindEQ__3_15_} imply that
\begin{equation} \label{GrindEQ__3_20_}
A_{S} :{\rm \; }L^{\gamma \left(p\right)} \left(I,{\rm \; }L^{p,2} \right)\to
L^{\gamma \left(r\right)^{{'} } } \left(I,L^{r',2} \right)
\end{equation}
is a bounded operator. This completes the proof of \eqref{GrindEQ__3_6_}. The proof of \eqref{GrindEQ__3_5_} is parallel to the proof of \eqref{GrindEQ__3_6_} and we omit the details (cf. [25]).
\end{proof}

\section{Proofs of main results}

In this section, we prove Theorem 1.1 and Theorem 1.3 by using the contraction mapping principle based on Strichartz estimates in Lorentz spaces.

\textbf{Proof of Theorem 1.1 and Theorem 1.3}

We can easily see that $2<r<2^{*}$. Let $T>0$ and $A>0$ which will be chosen later. Since $L^{r,2} $ is
normable, we can assume that $L^{r,2} $ is a Banach space. We define the following complete metric space
\begin{equation} \label{GrindEQ__4_1_}
D=\left\{u\in L^{\gamma \left(r\right)} \left(I,\;L^{r,2} \right):{\rm
\; \; }\left\| u\right\| _{L^{\gamma \left(r\right)} \left(I,\;L^{r,2}
\right)} \le A\right\},
\end{equation}
which is equipped with the metric
\begin{equation} \label{GrindEQ__4_2_}
d\left(u,{\rm \; }v\right)={\rm \; }\left\| u-v\right\| _{L^{\gamma
\left(r\right)} \left(I,\;L^{r,2} \right)},
\end{equation}
where $I=[0, T]$, $I=[-T, T]$ or $I=[0,\infty)$.
We consider the mapping
\begin{equation} \label{GrindEQ__4_3_}
T:{\rm \; }u(t)\to S(t)u_{0} -i\lambda \int _{0}^{t}S(t-\tau
)\left|x\right|^{-b} \left|u(\tau )\right|^{\sigma } u(\tau )d\tau  \equiv
u_{L} +u_{NL} ,
\end{equation}
where
\begin{equation} \label{GrindEQ__4_4_}
u_{L} =S(t)u_{0} , u_{NL} =-i\lambda \int _{0}^{t}S(t-\tau )\left|x\right|^{-b}
\left|u(\tau )\right|^{\sigma } u(\tau )d\tau  .
\end{equation}
Lemma 3.2 yields that
\begin{equation} \label{GrindEQ__4_5_}
\left\| u_{L} \right\| _{L^{\gamma \left(r\right)} \left(I,\;L^{r,2}
\right)} \lesssim\left\| u_{0} \right\| _{2},
\end{equation}
\begin{equation} \label{GrindEQ__4_6_}
\left\| u_{NL} \right\| _{L^{\gamma \left(r\right)} \left(I,\;L^{r,2}
\right)} \lesssim\left\| \left|x\right|^{-b} \left|u\right|^{\sigma } u\right\|
_{L^{\gamma \left(r\right)^{{'} } } \left(I,\;L^{r',2} \right)}.
\end{equation}
\begin{equation} \label{GrindEQ__4_7_}
\left\| Tu-Tv \right\| _{L^{\gamma \left(r\right)} \left(I,\;L^{r,2} \right)} \lesssim\left\| \left|x\right|^{-b} \left|u\right|^{\sigma } u-\left|x\right|^{-b} \left|v\right|^{\sigma }v\right\|_{L^{\gamma \left(r\right)^{'} } \left(I,\;L^{r',2} \right)}.
\end{equation}
Since $\frac{1}{r'} =\frac{\sigma +1}{r} +\frac{b}{n}$, it follows from Lemma 2.1-Lemma 2.4 that
\begin{equation} \label{GrindEQ__4_8_}
\left\| \left|x\right|^{-b} \left|u\right|^{\sigma } u\right\| _{L^{r',2} } \lesssim \left\| \left|x\right|^{-b} \right\| _{L^{\frac{n}{b} ,\infty } } \left\|
u\right\| _{L^{r,\infty } }^{\sigma } \left\| u\right\| _{L^{r,2} } \lesssim \left\| u\right\| _{L^{r,2} }^{\sigma +1} .
\end{equation}
Using Remark 2.6 of [18] and Lemma 2.1-Lemma 2.4, we also have
\begin{eqnarray}\begin{split} \label{GrindEQ__4_9_}
\left\| \left|x\right|^{-b} \left|u\right|^{\sigma }
u-\left|x\right|^{-b} \left|v\right|^{\sigma } v\right\| _{L^{r',2} } &\lesssim \left\| \left|x\right|^{-b} \left(\left|u\right|^{\sigma }
+\left|v\right|^{\sigma } \right)\left(u-v\right)\right\| _{L^{r,2} }
\\
&\lesssim\left(\left\| u\right\| _{L^{r,{\rm 2}}
}^{\sigma } +\left\| v\right\| _{L^{r,{\rm 2}} }^{\sigma } \right)\left\|u-v\right\| _{L^{r,2} } .
\end{split}\end{eqnarray}
\textbf{Case 1.} We consider the mass-critical case $\sigma =\frac{4-2b}{n} $, i.e. we prove Theorem 1.1. We take $I=\left[0,{\rm \; }T\right]$.
Noticing that
\begin{equation} \label{GrindEQ__4_10_}
\frac{1}{\gamma \left(r\right)^{{'} } } =\frac{\sigma +1}{\gamma
\left(r\right)} ,
\end{equation}
and using \eqref{GrindEQ__4_6_}, \eqref{GrindEQ__4_8_} and H\"{o}lder inequality, we immediately have
\begin{equation} \label{GrindEQ__4_11_}
\left\| u_{NL} \right\| _{L^{\gamma \left(r\right)} \left(I,\;L^{r,2}
\right)} \le C\left\| u\right\| _{L^{\gamma \left(r\right)} \left(I{\rm ,\;
}L^{r,2} \right)}^{\sigma +1} .
\end{equation}
Using \eqref{GrindEQ__4_7_}, \eqref{GrindEQ__4_9_}, \eqref{GrindEQ__4_10_} and H\"{o}lder inequality, we also have
\begin{equation}\label{GrindEQ__4_12_}
\left\|Tu-Tv\right\|_{L^{\gamma \left(r\right)} \left(I{\rm ,\;
}L^{r,2} \right)}\le C\left(\left\| u\right\| _{L^{\gamma \left(r\right)}
\left(I,\;L^{r,2} \right)}^{\sigma } +\left\| v\right\| _{L^{\gamma
\left(r\right)} \left(I,\;L^{r,2} \right)}^{\sigma } \right)
\left\|u-v\right\| _{L^{\gamma \left(r\right)} \left(I,\;L^{r,2} \right)} .
\end{equation}
By Strichartz estimates \eqref{GrindEQ__4_5_}, we can see that $\left\|
S(t)u_{0} \right\| _{L^{\gamma \left(r\right)} \left(\left[0,T\right]{\rm ,\;
}L^{r,2} \right)} \to 0$, as $T\to 0$. Take $A>0$ such that $CA^{\sigma } \le
\frac{1}{4} $ and $T>0$ such that
\begin{equation} \label{GrindEQ__4_13_}
\left\| S(t)u_{0} \right\| _{L^{\gamma \left(r\right)}
\left(\left[0,T\right],\;L^{r,2} \right)} \le \frac{A}{2} .
\end{equation}
Then we have
\begin{equation} \label{GrindEQ__4_14_}
\left\| Tu\right\| _{L^{\gamma \left(r\right)} \left(I,\;L^{r,2}
\right)} \le \left\| S(t)u_{0} \right\| _{L^{\gamma \left(r\right)} \left(I{\rm
,\; }L^{r,2} \right)} +C\left\| u\right\| _{L^{\gamma \left(r\right)}
\left(I,\;L^{r,2} \right)}^{\sigma +1} \le A,
\end{equation}
\begin{equation} \label{GrindEQ__4_15_}
\left\| Tu-Tv\right\| _{L^{\gamma \left(r\right)} \left(I,\;L^{r}
\right)}\le 2CA^{\sigma } \left\| u-v\right\| _{L^{\gamma
\left(r\right)} \left(I,\;L^{r,2} \right)} \le \frac{1}{2} \left\|
u-v\right\| _{L^{\gamma \left(r\right)} \left(I,\;L^{r,2} \right)} .
\end{equation}
\eqref{GrindEQ__4_14_} and \eqref{GrindEQ__4_15_} imply that $T: (D,{\rm \; }d)\to (D,{\rm \; }d)$ is a contraction mapping. From Banach fixed
point theorem, there exists a unique solution $u$ of \eqref{GrindEQ__1_1_} in
$(D,\;d)$. Furthermore for any $2\le p<2^{*} $, it follows
from Lemma 3.2, \eqref{GrindEQ__4_8_}, \eqref{GrindEQ__4_10_} and H\"{o}lder
inequality that
\begin{equation} \label{GrindEQ__4_16_}
\left\| u\right\| _{L^{\gamma \left(p\right)} \left(I,\;L^{p,2} \right)}
\lesssim \left\| u_{0} \right\| _{2} +\left\| u\right\| _{L^{\gamma
\left(r\right)} \left(I,\;L^{r,2} \right)}^{\sigma +1},
\end{equation}
which implies $u\in L^{\gamma \left(p\right)} \left(I,\;L^{p,2}
\right)$. Noticing that $u\left(T\right)\in L^{2} $, we can extend the solution
above using a standard argument and we omit the details (cf. [25]). This completes the
proof of the local well-posedness of \eqref{GrindEQ__1_1_} in the mass-critical
case. Analogous to the above, we can prove the small data global well-posedness
and we only sketch the proof. If we take $I=\left(0,{\rm \; }\infty \right)$,
we also have \eqref{GrindEQ__4_5_}, \eqref{GrindEQ__4_11_},
\eqref{GrindEQ__4_12_}. We take $A=2C\left\| u_{0} \right\| _{2} $ and $\delta
=2\left(4C\right)^{-\frac{\sigma +1}{\sigma } } $. If $\left\| u_{0} \right\|
_{2} \le \delta $, i.e. $CA^{\sigma } <\frac{1}{4} $, then we have
\begin{equation} \label{GrindEQ__4_17_}
\left\| Tu\right\| _{L^{\gamma \left(r\right)} \left(I,\;L^{r,2}
\right)} \le \frac{A}{2} +CA^{\sigma +1} \le A,
\end{equation}
\begin{equation} \label{GrindEQ__4_18_}
\left\| Tu-Tv\right\| _{L^{\gamma \left(r\right)} \left(I,\;L^{r,2}
\right)} \le 2CA^{\sigma } \left\| u-v\right\| _{L^{\gamma \left(r\right)}
\left(I,\;L^{r,2} \right)} \le \frac{1}{2} \left\| u-v\right\|
_{L^{\gamma \left(r\right)} \left(I,\;L^{r,2} \right)} .
\end{equation}
So $T: (D,{\rm \; }d)\to (D,{\rm \; }d)$ is a contraction mapping and
there exists a unique solution $u$ in $D$.

\noindent Furthermore, for any $2\le p<2^{*} $, it follows from Lemma 3.2 that
\begin{equation} \label{GrindEQ__4_19_}
\left\| u\right\| _{L^{\gamma \left(p\right)} (\left[0,{\rm \; }\infty
\right),\;L^{p,2} )} \lesssim \left\| u_{0} \right\| _{2} +\left\|
u\right\| _{L^{\gamma \left(r\right)} (\left[0,{\rm \; }\infty \right){\rm ,\;
}L^{r,2} )}^{\sigma +1} \le A=2C\left\| u_{0} \right\| _{2}.
\end{equation}
This completes the proof Theorem 1.1.

\noindent \textbf{Case 2.} We consider the mass-subcritical case $0<\sigma
<\frac{4-2b}{n} $, i.e. we prove Theorem 1.3. The proof of Theorem 1.3 is
similar to one of Theorem 1.1 and we only sketch the proof. Put
$I=\left[-T,{\rm \; }T\right]$. Noticing that
\begin{equation} \label{GrindEQ__4_20_}
\frac{1}{\gamma \left(r\right)^{{'} } } =\frac{\sigma +1}{\gamma
\left(r\right)} +\left(1-\frac{n\sigma +2b}{4} \right),
\end{equation}
and using \eqref{GrindEQ__4_6_}-\eqref{GrindEQ__4_9_}
and H\"{o}lder inequality, we have
\begin{equation} \label{GrindEQ__4_21_}
\left\| u_{NL} \right\| _{L^{\gamma \left(r\right)} \left(I,\;L^{r,2}
\right)} \le CT^{1-{\left(n\sigma +2b\right)
\mathord{\left/{\vphantom{\left(n\sigma +2b\right)
4}}\right.\kern-\nulldelimiterspace} 4} } \left\| u\right\| _{L^{\gamma
\left(r\right)} \left(I,\;L^{r,2} \right)}^{\sigma +1} ,
\end{equation}
\begin{eqnarray}\begin{split} \label{GrindEQ__4_22_}
\left\|Tu-Tv\right\|& _{L^{\gamma \left(r\right)} \left(I{\rm ,\;
}L^{r,2} \right)}\\
&\le CT^{1-{\left(n\sigma +2b\right)
\mathord{\left/{\vphantom{\left(n\sigma +2b\right)
4}}\right.\kern-\nulldelimiterspace} 4} } \left(\left\| u\right\| _{L^{\gamma \left(r\right)} \left(I,\;L^{r,2} \right)}^{\sigma }
+\left\| v\right\| _{L^{\gamma
\left(r\right)} \left(I,\;L^{r,2} \right)}^{\sigma}\right)\left\|
u-v\right\| _{L^{\gamma \left(r\right)} \left(I,\;L^{r,2} \right)} .
\end{split}
\end{eqnarray}
Put $A=2C\left\| u_{0} \right\| _{2} $ and take $T>0$ satisfying
$2CT^{1-{\left(n\sigma +2b\right) \mathord{\left/{\vphantom{\left(n\sigma
+2b\right) 4}}\right.\kern-\nulldelimiterspace} 4} } A^{\sigma } \le {1
\mathord{\left/{\vphantom{1 2}}\right.\kern-\nulldelimiterspace} 2} $. Using
the standard argument, we can prove Theorem 1.3 and we omit the details.



\end{document}